\newtheorem{Theorem}{Theorem}
\newtheorem{Lemma}{Lemma}
\newenvironment {proof} {{\bf Proof.}}{\hspace*{\fill}$\Box$\par\vspace{4mm}}
\def\n{\newblock}
\def\b{\bibitem}
\begin{document}

\title{\bf On the Spectrum of Middle-Cubes\thanks{Research
supported by National Natural Science Foundation of China
(No.~50676091) and Program for New Century Excellent Talents in
University (NCET-06-0546).}}

\author{Ke Qiu${}^a$, \ Rong Qiu${}^b$, \ Yong Jiang${}^b$,
\ Jian Shen${}^c$\\
\small ${}^a$Department of Computer Science, Brock University,
St.~Catharines, Ontario, Canada\\
\small ${}^b$State Key Laboratory of Fire Science, University of
Science and Technology of China\\
\small Hefei, Anhui 230026, P.R.~China\\
\small ${}^c$Department of Mathematics, Texas State University, San
Marcos, TX 78666, USA }

\date{\empty}

\maketitle

\begin{abstract} A middle-cube is an induced subgraph consisting of
nodes at the middle two layers of a hypercube. The middle-cubes are
related to the well-known Revolving Door (Middle Levels) conjecture.
We study the middle-cube graph by completely characterizing its
spectrum. Specifically, we first present a simple proof of its
spectrum utilizing the fact that the graph is related to Johnson
graphs which are distance-regular graphs and whose eigenvalues can
be computed using the association schemes. We then give a second
proof from a pure graph theory point of view without using its
distance regular property and the technique of association schemes.
\end{abstract}

\section{Introduction}

The $n$-dimensional {\em hypercube}, $Q_n$, has $2^n$ nodes such
that two nodes $u$ and $v$, $0 \leq u, v \leq 2^n - 1$, are
connected if and only if their binary representations differ in
exactly one bit. For an odd $n = 2k + 1$, the {\em middle-cube},
$M_n$, is the subgraph induced by all the nodes whose binary
representations have either $k$ 1's or $k+1$ 1's. Fig.~1 shows a
3-cube with its middle-cube highlighted.

\begin{figure}
\centering
\includegraphics[width=1.52in]{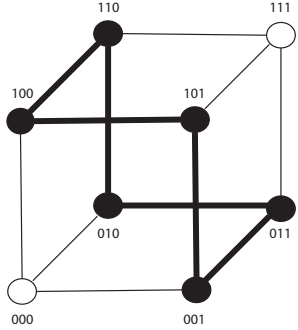}
\caption{A 3-cube}
\end{figure}

The middle-cube $M_{2k+1}$ consists of the nodes at the middle two
layers of the corresponding hypercube $Q_{2k+1}$. Equivalently,
these nodes are at the middle levels $k$ and $k+1$ of the Boolean
lattice ${\cal B}_{2k+1}$ (and the Hasse diagram of ${\cal
B}_{2k+1}$ is isomorphic to $Q_{2k+1}$) \cite{Shields}. Middle-cubes
have been considered as a possible topology to interconnect
processors in networks \cite{Madabhushi}. A well-known open problem
concerning middle-cubes is {\em the Revolving Door (Middle Levels)
conjecture} \cite{Havel, West_Problem}: All middle-cubes $M_{2k+1}$
are Hamiltonian. The conjecture has been verified for $k \le 17$
\cite{Shields} but remains open in general. Partial results on the
conjecture can be found in \cite{Johnson, Savage, Shields}. In
particular, Johnson proved in 2004 that $M_{2k+1}$ has a cycle of
length $(1-o(1))|M_{2k+1}|$, where $|M_{2k+1}|= 2 { 2k +1 \choose
k}$ is the number of vertices in $M_{2k+1}$.

The {\em spectrum} of a graph consists of all distinct eigenvalues
and their respective multiplicities of the adjacency matrix of the
graph. It is worth mentioning that the spectral and structural
properties of a graph are related \cite{Cvetkovic, West}. For
example, van den Heuvel~\cite{Heuvel} proved some necessary spectral
conditions for a graph to be Hamiltonian. To better understand
various properties for the middle-cubes, it may be necessary to
study the spectrum for middle-cubes. In the next section, we give a
complete characterization for the spectrum of the middle-cubes by
giving two different proofs, one from the distance-regular graph
point of view, and the other from a pure graph theory point of view.

\section{Spectrum of Middle-Cubes}

We always assume $n=2k+1$ throughout the paper. Without confusion
from the context, we abuse the notation $M_n$ for both the middle
cube and its adjacency matrix. The eigenvalues and their
corresponding multiplicities for $M_n$, $n$ = 3, 5, 7, 9, are given
in Table 1. In this section, we will prove that
\begin{Theorem} \label{main}
The characteristic
polynomial of $M_n$ is
$$|\lambda {\bf I} - M_n| = \Pi_{i=1}^{k+1} (\lambda \pm i)^{{n
\choose k+1-i} - {n \choose k-i}}.$$
\end{Theorem}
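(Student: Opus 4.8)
The plan is to exploit the bipartiteness of $M_n$. Flipping a single bit changes the number of $1$'s by exactly one, so every edge of $M_n$ joins a node with $k$ ones to a node with $k+1$ ones. Ordering the nodes so that the ${n \choose k}$ nodes on the lower level come first, the adjacency matrix takes the block form
$$M_n = \left(\begin{array}{cc} {\bf 0} & B \\ B^{T} & {\bf 0} \end{array}\right),$$
where $B$ is the ${n \choose k}\times{n \choose k+1}$ inclusion matrix whose $(S,T)$ entry is $1$ precisely when the $k$-set $S$ is contained in the $(k+1)$-set $T$. For such a matrix the nonzero eigenvalues occur in pairs $\pm\sigma$, where the $\sigma$ are the singular values of $B$: if $Bv=\sigma u$ and $B^{T}u=\sigma v$, then $(u,v)$ and $(u,-v)$ are eigenvectors of $M_n$ for $\sigma$ and $-\sigma$ respectively. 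It therefore suffices to find the eigenvalues of $BB^{T}$ together with their multiplicities.

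I would next compute $BB^{T}$ combinatorially. Its $(S,S')$ entry counts the $(k+1)$-sets containing both $S$ and $S'$: this equals $n-k$ when $S=S'$, equals $1$ when $|S\cap S'|=k-1$, and equals $0$ otherwise. Since $n=2k+1$ gives $n-k=k+1$, this says exactly that
$$BB^{T} = (k+1)\,{\bf I} + A,$$
where $A$ is the adjacency matrix of the Johnson graph $J(n,k)$, whose vertices are the $k$-subsets of $[n]$, two of them being adjacent when they meet in $k-1$ points. The determination of the spectrum of $M_n$ is thereby reduced to the well-known spectrum of $J(n,k)$.

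The last step is to substitute the Johnson spectrum and to exploit the arithmetic that makes the answer integral. The eigenvalues of $J(n,k)$ are $\theta_j=(k-j)(n-k-j)-j$ for $j=0,1,\ldots,k$, with multiplicities ${n \choose j}-{n \choose j-1}$. Adding the shift $k+1$ and using $n=2k+1$, a short expansion collapses $(k+1)+\theta_j$ into the perfect square $(k+1-j)^{2}$; this simplification is the heart of the matter and is special to the middle levels. Hence $BB^{T}$ has eigenvalues $(k+1-j)^{2}$, the singular values of $B$ are the integers $k+1-j$, and writing $i=k+1-j$ turns the multiplicity ${n \choose j}-{n \choose j-1}$ into ${n \choose k+1-i}-{n \choose k-i}$. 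Recording the pair $\pm i$ for each $i=1,\ldots,k+1$ then gives
$$|\lambda\,{\bf I}-M_n| = \prod_{i=1}^{k+1}\bigl((\lambda-i)(\lambda+i)\bigr)^{{n \choose k+1-i}-{n \choose k-i}},$$
which is the claimed identity (here $\lambda\pm i$ abbreviates the two factors $\lambda-i$ and $\lambda+i$). As a check, the multiplicities telescope, $\sum_{i=1}^{k+1}\bigl({n \choose k+1-i}-{n \choose k-i}\bigr)={n \choose k}$, so counting both signs accounts for all $2{n \choose k}$ vertices and no eigenvalue $0$ appears.

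I expect the only genuine obstacle to be the appeal to the spectrum of the Johnson graph. If one simply cites it (it comes from the Johnson association scheme, which is why a distance-regular proof is natural here), the argument above is essentially complete. A self-contained alternative would diagonalize $BB^{T}$ directly through the theory of inclusion matrices, for example by decomposing the real vector space spanned by the $k$-subsets into its $S_n$-irreducible constituents and checking that each constituent is an eigenspace; this linear-algebraic route, which avoids association schemes altogether, is presumably the path taken by the authors' second proof.
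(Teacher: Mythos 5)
Your proposal is correct and follows essentially the same route as the paper's first proof: both reduce the problem to the known spectrum of a Johnson graph by squaring, the paper via $M_n^2 = \mathrm{diag}(J(2k+1,k+1), J(2k+1,k+1)) + (k+1)I$ and you via $BB^T = (k+1)I + J(n,k)$ (the same computation, since $J(n,k) \cong J(n,k+1)$ and $M_n^2$ is exactly the block-diagonal matrix with blocks $BB^T$ and $B^TB$), and both then use the collapse of $(k+1)+\theta_j$ into $(k+1-j)^2$ together with the bipartite $\pm$ symmetry. Your singular-value phrasing of the pairing is a slightly more explicit justification than the paper's one-line appeal to bipartiteness, but it is the same argument; your guess about the second proof is also close in spirit, though the paper constructs eigenvectors directly from weight functions on $r$-subsets and uses Gottlieb's full-rank theorem for inclusion matrices rather than the $S_n$-irreducible decomposition.
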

We also note that the sequence 1, 2, 1, 4, 5, 1, 6, 14, 14, ...
appears in the {\em The On-Line Encyclopedia of Integer Sequences}
as sequence {\em A050166} \cite{Sloane, Guy}.

\begin{table}[th]
\caption{Eigenvalues of Middle-Cubes $M_n$ for $n$ = 3, 5, 7, 9.}
\begin{center}
\begin{tabular}{|c|c|c|c|c|c|c|c|c|c|c|}        \hline
  & -5 & -4 & -3 & -2 & -1 & 1 & 2 & 3 & 4 & 5 \\ \hline
3 &    &    &    &  1 &  2 & 2 & 1 &   &   &   \\ \hline
5 &    &    & 1  &  4 & 5  & 5 & 4 & 1 &   &   \\ \hline
7 &    &  1 & 6  & 14 & 14 &14 &14 & 6 & 1 &   \\ \hline
9 & 1  &  8 & 27 & 48 &42  &42 &48 &27 & 8 & 1 \\ \hline
\end{tabular}
\end{center}
\label{middlecube}
\end{table}

We will first consider the middle cube
by relating it to Johnson graphs.
Let $X$ be a finite set and $e$ an positive integer. The {\it
Johnson graph of the e-sets in $X$} has a vertex set $X \choose e$,
the set of all $e$-subsets of $X$ (subsets of cardinality $e$). Two
vertices $u$ and $v$ are adjacent whenever $|u \cap v|$ = $e-1$
\cite{Brouwer1}. Since Johnson graphs are distance regular, the
eigenvalues of Johnson graphs can be computed using association
schemes described in \cite[Chapter 2]{Brouwer1} and \cite[Pages
69-72]{Brouwer2}.

\begin{proof}
Let $M_{2k+1}$ be the adjacency matrix of the middle-cube. Let
$J(n,m)$ be the adjacency matrix of the Johnson graph with vertex
set ${ [n] \choose m }$, where two $m$-subsets are adjacent when
they have exactly $m-1$ elements in common. Then by the definition
of $M_{2k+1}$ and $J(n,m)$,
$$M_{2k+1}^2 = \left [ \begin{array}{cc}
J(2k+1, k+1) & O \\
O & J(2k+1, k+1)
\end{array} \right ] + (k+1) I.$$
By \cite[Page 79]{Brouwer2}, $J(n, m)$ has eigenvalues
$(m-i)(n-m-i)-i$ with multiplicity ${n \choose i} - { n \choose
{i-1}}$. (The proof of the spectrum of $J(n, m)$ requires techniques
using the association schemes.) Then $M_{2k+1}^2$ has eigenvalues
$$(k+1-i)(k-i)-i+ (k+1) = (k+1-i)^2$$
with multiplicity $2\left ( {n \choose i} - { n \choose {i-1}}
\right ).$ Since $M_{2k+1}$ is bipartite, it has symmetric
positive/negative eigenvalues. Therefore $M_{2k+1}$ has eigenvalues
$\pm (k+1-i)$ with multiplicity ${n \choose i} - { n \choose
{i-1}}$.
\end{proof}

We now study the spectrum of the middle cube from a graph
theoretical point of view. Before we give a second proof for
Theorem~\ref{main} using graph theory, several definitions are in
order. We denote by $S$ the set $\{1,2,\ldots, n\}$. We use $A+B$ to
denote the union of the sets $A$ and $B$. Similarly, $A-B$ denotes
the set of elements that are in $A$ but not in $B$, while $|A|$
represents the size of the set $A$. An $r$-set is a set of size $r$.
W denote by ${ S \choose k }$ the set of all $k$-subsets of $S$. For
the convenience of our proofs, we also view ${ S \choose k } + { S
\choose k +1 }$ as the vertex set of the middle-cube $M_n$. Thus the
edge set of $M_n$ is induced by the inclusion relation; that is, two
distinct vertices $A, B$ in ${ S \choose k } + { S \choose k  +1 }$
are adjacent if and only if $A \subset B$ or $B \subset A$.

For each positive integer $i$, let $A_1^{(i)}, A_2^{(i)}, \ldots,
A_{ { n \choose i }}^{(i)} $ be an ordering of all ${ n \choose i }$
$i$-subsets of $S$. Let $r$ be a fixed positive integer with $r \le
k$. Let $x_1, \ldots, x_{{n \choose r}}$ be real variables. Define a
weight function:
$$f(A_i^{(r)}) = x_i, \  1 \le i \le { n \choose r }$$
subject to the following ${ n \choose r-1 }$ constraints
\begin{equation} \label{constraints}
\sum _{ i \not \in R} f(R + \{ i \}) =0 \mbox { for each } R \in {S
\choose r-1}. \end{equation} For each $A \subseteq S$, we define
\begin{equation} \label{function}
f(A) = \sum_ {A_i ^{(r)}\in { A \choose r } } f\left
(A_i^{(r)}\right ).
\end{equation}
(Thus $f(A) =0$ whenever $|A| \le r-1$.) For each $i$ with $r \le i
\le k$, we define
$$V_i^{(r)} = \left \{\left ( f \left (A_1^{(i)} \right ), \ldots, f \left (A_{ { n
\choose i }}^{(i)} \right ) \right ) \right \}$$ and $$ V_{i,
i+1}^{(r)} = \left \{ \left ( f \left (A_1^{(i)} \right ), \ldots, f
\left (A_{ { n \choose i }}^{(i)} \right ) , f \left (A_1^{(i+1)}
\right ), \ldots, f \left (A_{ { n \choose i+1 }}^{(i+1)} \right )
\right ) \right \} .$$ Then both $(V_i^{(r)}, +)$ and
$(V_{i,i+1}^{(r)}, +)$ are vector spaces on reals.
We will show that $V_{k,k+1}^{(r)}$ is the eigenspace with dimension
${ n \choose r} - { n \choose r-1 }$ corresponding to the eigenvalue
$k+1-r$ for the matrix $M_n$.

\begin{Lemma}\label{Lemma1} Let $A$ be a subset of $S$.
Suppose $i \not \in A$ and $j \in A$. Then
$$f(A+\{i\}) = f(A) + \sum _{R \in { A \choose r-1} } f(R+ \{ i\})$$
and
$$f(A-\{j\}) = f(A) - \sum _{R \in { A-\{j\} \choose r-1} } f(R+ \{
j\}).$$
\end{Lemma}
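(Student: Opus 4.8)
The plan is to prove both identities directly from the defining formula (\ref{function}) for $f$, by partitioning the relevant collection of $r$-subsets according to whether it contains the distinguished element. Note that the constraints (\ref{constraints}) are not needed here; only the additive definition of $f$ on arbitrary subsets comes into play. This is why I expect the lemma to be a short bookkeeping computation rather than anything that requires ideas.

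First I would establish the additive identity. By (\ref{function}), $f(A+\{i\}) = \sum_{T \in { A+\{i\} \choose r }} f(T)$. Since $i \not\in A$, every $r$-subset $T$ of $A+\{i\}$ either avoids $i$ or contains $i$. The $r$-subsets avoiding $i$ are exactly the $r$-subsets of $A$, contributing $\sum_{T \in { A \choose r }} f(T) = f(A)$. Each $r$-subset containing $i$ has the form $R+\{i\}$ for a unique $(r-1)$-subset $R$ of $A$, so these contribute $\sum_{R \in { A \choose r-1 }} f(R+\{i\})$. Summing the two disjoint contributions yields the first formula.

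The second identity follows by the same partitioning, now applied to $A$ rather than to $A+\{i\}$. Writing $f(A) = \sum_{T \in { A \choose r }} f(T)$ and splitting the $r$-subsets $T$ of $A$ according to whether $j \in T$, the subsets with $j \not\in T$ are precisely the $r$-subsets of $A-\{j\}$ and sum to $f(A-\{j\})$, while the subsets with $j \in T$ are exactly $R+\{j\}$ with $R \in { A-\{j\} \choose r-1 }$ and sum to $\sum_{R \in { A-\{j\} \choose r-1 }} f(R+\{j\})$. Hence $f(A) = f(A-\{j\}) + \sum_{R \in { A-\{j\} \choose r-1 }} f(R+\{j\})$, and transposing the last term gives the stated expression for $f(A-\{j\})$.

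There is no substantive obstacle: both claims reduce to decomposing a binomial-indexed sum into the two parts determined by membership of the distinguished element. The only point needing a line of care is the bijection $R \leftrightarrow R+\{i\}$ (respectively $R \leftrightarrow R+\{j\}$) between the $(r-1)$-subsets of $A$ (respectively of $A-\{j\}$) and the $r$-subsets containing that element, which is immediate. I would therefore present the proof as two short paragraphs, one per identity, each consisting of this split-the-sum argument.
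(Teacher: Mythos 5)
Your proof is correct and follows essentially the same route as the paper: both arguments expand $f$ via its defining sum and split the $r$-subsets according to membership of the distinguished element, with the paper writing the second identity as a subtraction from $f(A)$ rather than transposing a term, which is the same computation in a different order. Your observation that the constraints (\ref{constraints}) play no role here is also accurate, as the paper's proof likewise uses only the definition (\ref{function}).
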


\begin{proof}
$$ \begin{array}{ll}
f(A + \{ i \}) & = \sum _ { R \in { A + \{ i \} \choose r }} f(R) \\
& = \sum_ {i \not \in R \in {A + \{ i \} \choose r }} f(R) + \sum_
{i \in R \in { A + \{ i \} \choose r }} f(R)\\
& = \sum_ { R \in { A \choose r }} f(R) + \sum_
{R-\{i\} \in { A \choose r -1 }} f(R)\\
&= f(A) +\sum_ {R\in { A \choose r -1 }} f(R+\{i\}).
\end{array}$$
Similarly,
$$\begin{array}{ll}
f(A-\{j\}) & = \sum _ { R \in { A - \{ j \} \choose r }} f(R) \\
&=\sum _ { R \in { A \choose r }} f(R) - \sum _{ j \in R \in { A
\choose r}} f(R) \\
&=f(A) -\sum _{ R-\{j\} \in { A-\{j\}
\choose r-1}} f(R) \\
&= f(A)-\sum _{R \in { A-\{j\} \choose r-1} } f(R+ \{ j\}).
\end{array}$$
\end{proof}

\begin{Lemma} \label{Lemma2}
Let $A$ be a subset of $S$. Then
$$\sum_{ i \not \in A} \sum _{ R \in { A \choose r-1}} f(R+\{i\}) =
-r f(A).$$
\end{Lemma}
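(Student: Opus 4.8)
The plan is to exploit the defining constraints~(\ref{constraints}) after interchanging the order of summation, and then to finish with a short counting argument. First I would pull the sum over $(r-1)$-subsets $R$ of $A$ to the outside, writing the left-hand side as
$$\sum_{i \not\in A} \sum_{R \in { A \choose r-1}} f(R+\{i\}) = \sum_{R \in { A \choose r-1}} \sum_{i \not\in A} f(R+\{i\}).$$
For a fixed $R \in { A \choose r-1}$ the inner sum runs only over $i \not\in A$, whereas constraint~(\ref{constraints}) records the vanishing sum over all $i \not\in R$. Since $R \subseteq A$, the indices $i \not\in R$ split disjointly into those lying in $A-R$ and those lying outside $A$, so the constraint rearranges to
$$\sum_{i \not\in A} f(R+\{i\}) = -\sum_{j \in A-R} f(R+\{j\}).$$

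Substituting this identity term by term turns the left-hand side into $-\sum_{R \in { A \choose r-1}} \sum_{j \in A-R} f(R+\{j\})$, in which the summand $f(R+\{j\})$ is the weight of an $r$-subset $R+\{j\}$ of $A$. The final step is to reorganize this double sum by the value $T = R+\{j\}$. Each $r$-subset $T \subseteq A$ arises exactly $r$ times, once for every choice of a distinguished element $j \in T$ (with the corresponding $R = T-\{j\}$, which indeed lies in ${A \choose r-1}$ and satisfies $j \in A-R$). Hence the double sum collapses to $r \sum_{T \in { A \choose r}} f(T)$, which equals $r\,f(A)$ by the definition~(\ref{function}) of $f$, giving the claimed value $-r\,f(A)$.

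I do not expect a genuine obstacle here, since both the constraint manipulation and the recombination are elementary once the right viewpoint is adopted. The only points demanding care are the bookkeeping in the last step — confirming that every $r$-subset of $A$ is recovered with multiplicity exactly $r$ — and the disjoint splitting of the index set $\{i : i \not\in R\}$ into $i \in A-R$ and $i \not\in A$, which is what makes the single global constraint~(\ref{constraints}) applicable to the restricted inner sum over $i \not\in A$.
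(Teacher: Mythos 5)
Your proof is correct and is essentially the paper's own argument: both rest on applying constraint~(\ref{constraints}) to each $R \in {A \choose r-1}$, splitting the index set $\{i : i \not\in R\}$ into $A-R$ and the complement of $A$, and then counting that each $r$-subset of $A$ is hit exactly $r$ times so that the remaining double sum equals $r f(A)$ by~(\ref{function}). The only difference is organizational — the paper expands $0 = \sum_{R} \sum_{i \not\in R} f(R+\{i\})$ and isolates the lemma's sum, while you substitute the constraint into the lemma's sum directly — which is a trivial rearrangement of the same computation.
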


\begin{proof} The lemma is trivial if $|A| \le r-1$ (in which case $f(A)=0$).
Suppose now $|A| \ge r$. By (\ref{constraints}) and
Lemma~\ref{Lemma1},
$$\begin{array}{ll}
0 & = \sum _{ R \in { A \choose r-1}} \sum _{ i \not \in R}
f(R+\{i\}) \\
& =\sum _{ R \in { A \choose r-1}} \left ( \sum _{i \in A-R }
f(R+\{i\}) +\sum _{i \not \in A } f(R+\{i\})\right ) \\
& =\sum _{ R \in { A \choose r-1}} \sum _{R \subset B \in { A
\choose r }} f(B) + \sum _{ R \in { A \choose r-1}} \sum _{ i \not
\in A}f(R+\{i\})\\
& =\sum _{B \in { A \choose r }} \sum _{ R \in { B \choose r-1}}
f(B) +\sum _{ i \not \in A} \sum _{ R \in { A \choose r-1}}
f(R+\{i\})\\
&= r\sum _{B \in { A \choose r }}f(B) +\sum _{ i \not \in A} \sum _{
R \in { A \choose r-1}}
f(R+\{i\})\\
&= rf(A) + \sum _{ i \not \in A} \sum _{ R \in { A \choose r-1}}
f(R+\{i\}),
\end{array}$$
from which Lemma~\ref{Lemma2} follows.
\end{proof}

\begin{Lemma} \label{Lemma3}
Let $A$ be a subset of $S$. Then
$$\sum_{ i \in A} \sum _{ R \in { A -\{i\} \choose r-1}} f(R+\{i\}) =
r f(A).$$
\end{Lemma}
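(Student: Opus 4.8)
The plan is to prove Lemma~\ref{Lemma3} by a double-counting (interchange-of-summation) argument, exactly paralleling the manipulation that appeared in the proof of Lemma~\ref{Lemma2}. The key observation is that the left-hand side is a sum over ordered pairs $(i,R)$ with $i \in A$ and $R \in { A - \{i\} \choose r-1}$, and each such pair records precisely the same datum as a pair $(B,i)$ in which $B = R + \{i\}$ is an $r$-subset of $A$ together with a distinguished element $i \in B$.

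First I would dispose of the trivial case $|A| \le r-1$: here there are no $r$-subsets of $A$, so the left-hand side is an empty sum equal to $0$, while $f(A)=0$ by the remark following (\ref{function}); hence the identity holds. Assuming $|A| \ge r$, I would reindex the double sum. For fixed $i \in A$, as $R$ ranges over ${ A-\{i\} \choose r-1}$ the set $B := R + \{i\}$ ranges over exactly the $r$-subsets of $A$ that contain $i$. Reversing the order of summation therefore gives
$$\sum_{i \in A} \sum_{R \in { A-\{i\} \choose r-1}} f(R+\{i\}) = \sum_{B \in { A \choose r}} \sum_{i \in B} f(B) = \sum_{B \in { A \choose r}} |B|\, f(B).$$
Since every $B$ in the range has $|B| = r$, the inner factor is constant, and the right-hand side collapses to $r \sum_{B \in { A \choose r}} f(B) = r f(A)$ by the definition (\ref{function}) of $f(A)$.

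The argument carries essentially no obstacle; the only point requiring care is verifying that $(i,R) \mapsto (B,i)$ with $B = R + \{i\}$ is a well-defined bijection onto $\{(B,i) : B \in { A \choose r},\ i \in B\}$. This is immediate since $B \subseteq A$ forces $R = B - \{i\} \subseteq A - \{i\}$ with $|R| = r-1$, and conversely $R + \{i\} \subseteq A$ with $|R + \{i\}| = r$. It is worth noting that this lemma is a clean companion to Lemma~\ref{Lemma2}: the latter counts the contributions from adjoining an element outside $A$ and produces $-r f(A)$, whereas the present "inside element" count produces $+r f(A)$, and I expect the two identities to be used together in the forthcoming eigenvalue computation.
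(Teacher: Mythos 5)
Your proof is correct, but it takes a genuinely different route from the paper's. The paper derives Lemma~\ref{Lemma3} from the constraints (\ref{constraints}) together with Lemma~\ref{Lemma2}: it sums the constraint identity over all $R \in {A \choose r-1}$ to get $0 = \sum_{R}\sum_{i \notin R} f(R+\{i\})$, splits the inner sum according to whether $i \notin A$ or $i \in A-R$, recognizes the ``outside'' block as $-r f(A)$ by Lemma~\ref{Lemma2}, and concludes that the remaining ``inside'' block --- exactly the left-hand side of Lemma~\ref{Lemma3} --- must equal $r f(A)$. Your reindexing $(i,R) \leftrightarrow (B,i)$ with $B = R + \{i\}$ bypasses all of this: it uses only the definition (\ref{function}) and shows the identity holds for an \emph{arbitrary} weight function on $r$-sets, with no appeal to the constraints at all. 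This is a real simplification, and it also exposes a mild redundancy in the paper: the very same interchange of summation (passing from pairs $(R, i)$ with $i \in A-R$ to pairs $(B, R)$ with $B \in {A \choose r}$ and $R \in {B \choose r-1}$, each $B$ counted $r$ times) already occurs inside the paper's proof of Lemma~\ref{Lemma2}, so the paper in effect establishes your identity there and then re-derives it circuitously. What the paper's route emphasizes is the structural point that Lemmas~\ref{Lemma2} and \ref{Lemma3} are the two complementary halves of the constraint identity, so they must sum to zero; what your route buys is economy and the observation that only Lemma~\ref{Lemma2}, not Lemma~\ref{Lemma3}, genuinely depends on the constraints (\ref{constraints}).
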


\begin{proof} The lemma is trivial if $|A| \le r-1$ (in which case $f(A)=0$).
Suppose now $|A| \ge r$. By (\ref{constraints}), Lemmas~\ref{Lemma1}
and \ref{Lemma2},
$$\begin{array}{ll}
0 & =\sum _{ R \in { A \choose r-1}} \sum _{ i \not \in R}
f(R+\{i\}) \\
& =\sum _{ R \in { A \choose r-1}} \left ( \sum _{i \not \in A}
f(R+\{i\}) +\sum _{i \in A-R } f(R+\{i\}) \right ) \\
& =\sum _{ R \in { A \choose r-1}}\sum _{i \not \in A} f(R+\{i\}) +
\sum _{ R \in { A \choose r-1}}  \sum _{i \in A-R } f(R+\{i\})\\
&=\sum _{i \not \in A} \sum _{ R \in { A \choose r-1}}f(R+\{i\}) +
\sum _{i \in A} \sum _{ R \in { A-\{i\} \choose r-1}}  f(R+\{i\})\\
&= -rf(B) +\sum _{i \in A} \sum _{ R \in { A-\{i\} \choose r-1}}
f(R+\{i\}),\end{array}$$ from which Lemma~\ref{Lemma3} follows.
\end{proof}

Recall that $V_{k,k+1}^{(r)} =\left \{ \left ( f \left (A_1^{(k)}
\right ), \ldots, f \left (A_{ { n \choose k }}^{(k)} \right ) ,
f\left (A_1^{(k+1)} \right ), \ldots, f \left (A_{ { n \choose k+1
}}^{(k+1)} \right ) \right ) \right \} $ is a vector space on reals.
Let $E_{\lambda} $ be the eigenspace corresponding to the eigenvalue
$\lambda$ for the matrix $M_n$.

\begin{Lemma} \label{Lemma4}
Let $n=2k+1$ and $1 \le r \le k$. Then $V_{k,k+1}^{(r)} \subseteq
E_{k+1-r}.$
\end{Lemma}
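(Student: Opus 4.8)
The plan is to verify directly that every vector in $V_{k,k+1}^{(r)}$ satisfies the eigenvector equation $M_n v = (k+1-r)v$. A typical vector $v \in V_{k,k+1}^{(r)}$ has, as its coordinate indexed by a vertex $A$ (a $k$-subset or a $(k+1)$-subset of $S$), the value $f(A)$. Since $f$ depends linearly on the defining variables $x_1,\ldots,x_{{n \choose r}}$, it suffices to check the eigenvalue relation coordinate by coordinate for an arbitrary admissible choice of the $x_i$. Recalling that in $M_n$ the neighbours of a vertex $A$ are exactly the vertices obtained from $A$ by adding or deleting a single element, I would compute $(M_n v)_A = \sum_{B \sim A} f(B)$ and split into two cases according to whether $|A| = k$ or $|A| = k+1$.

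For a $k$-subset $A$, its $n-k = k+1$ neighbours are the sets $A+\{i\}$ with $i \not\in A$, so $(M_n v)_A = \sum_{i \not\in A} f(A+\{i\})$. Applying the first identity of Lemma~\ref{Lemma1} to each term rewrites this as $(k+1)f(A) + \sum_{i \not\in A} \sum_{R \in { A \choose r-1}} f(R+\{i\})$, and Lemma~\ref{Lemma2} collapses the double sum to $-rf(A)$, leaving exactly $(k+1-r)f(A)$. For a $(k+1)$-subset $A$, its $k+1$ neighbours are the sets $A-\{j\}$ with $j \in A$, so $(M_n v)_A = \sum_{j \in A} f(A-\{j\})$; the second identity of Lemma~\ref{Lemma1} turns this into $(k+1)f(A) - \sum_{j \in A} \sum_{R \in { A-\{j\} \choose r-1}} f(R+\{j\})$, and Lemma~\ref{Lemma3} evaluates the double sum as $rf(A)$, again yielding $(k+1-r)f(A)$.

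Since both cases give $(M_n v)_A = (k+1-r)f(A)$ at every vertex $A$, the vector $v$ lies in $E_{k+1-r}$, which establishes the claimed inclusion. The argument is essentially bookkeeping once the three preceding lemmas are in place; the one point deserving care is matching the two vertex types of the middle cube to the correct expansion in Lemma~\ref{Lemma1} and to the correct counting lemma (Lemma~\ref{Lemma2} versus Lemma~\ref{Lemma3}), since the sign of the correction term and the counting identity differ between the ``add an element'' and ``delete an element'' directions. I do not anticipate a genuine obstacle here: the real content is concentrated in Lemmas~\ref{Lemma2} and~\ref{Lemma3}, which encode the constraints~(\ref{constraints}) and are precisely what force the eigenvalue to equal $k+1-r$.
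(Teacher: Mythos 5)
Your proposal is correct and follows essentially the same route as the paper's own proof: in both arguments one verifies the eigenvector equation coordinatewise, expanding $\sum_{i \not\in A} f(A+\{i\})$ for $k$-subsets and $\sum_{j \in A} f(A-\{j\})$ for $(k+1)$-subsets via Lemma~\ref{Lemma1}, then collapsing the resulting double sums with Lemmas~\ref{Lemma2} and~\ref{Lemma3} to obtain $(k+1-r)f(A)$ in each case. The case-matching you flag as the delicate point is handled identically in the paper, so there is nothing to add.
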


\begin{proof} For any vertex $A \in { S \choose k} + {S \choose k+1}$ in the middle-cube $M_n$, let
$ \Gamma (A)$  be the neighbor set of $A$. Then
$$ \Gamma (A) = \left \{ \begin{array}{ll}
\{ A +\{i\}: i \not \in A\} & \mbox { if } A \in {S \choose k}; \\
\{ A - \{i\}: i \in A\} & \mbox { if } A \in {S \choose k+1}.
\end{array} \right . $$
Thus to prove the lemma, it suffices to prove the following two
identities:
\begin{equation} \label{equation2}
\sum_{B \in \Gamma (A)} f(B)= \sum_{i \not \in A} f(A+\{i\}) =
(k+1-r) f(A) \mbox { for each } A \in {S \choose k}
\end{equation}
\begin{equation} \label{equation3}
\sum_{B \in \Gamma (A)} f(B)=\sum_{i \in A} f(A- \{i\}) = (k+1-r)
f(A) \mbox { for each } A \in {S \choose k+1}
\end{equation}
Proof of (\ref{equation2}): By Lemmas \ref{Lemma1} and \ref{Lemma2},
$$\begin{array}{ll}
\sum_{i \not \in A} f(A+\{i\}) &= \sum_{i \not \in A} \left ( f(A) +
\sum _{R \in { A \choose r-1} } f(R+ \{ i\}) \right ) \\
&=\sum_{i \not \in A} f(A) +\sum_{i \not \in A}\sum _{R \in { A
\choose r-1} } f(R+ \{ i\})\\
&=(n-|A|) f(A) - rf(A) \\
&=(n-|A|-r) f(A) =(k+1-r) f(A).
\end{array}$$
Proof of (\ref{equation3}): By Lemmas \ref{Lemma1} and \ref{Lemma3},
$$\begin{array}{ll}
\sum_{i \in A} f(A- \{i\}) & = \sum_ {i \in A} \left (  f(A) -\sum
_{R \in { A-\{i\} \choose r-1} } f(R+ \{ i\}) \right ) \\
&=\sum _{i \in A}  f(A)- \sum _{i \in A}\sum _{R \in { A-\{i\}
\choose
r-1} } f(R+ \{ i\}) \\
&= |A| f(A) -r f(A) \\
&= (k+1-r) f(A).
\end{array}$$
\end{proof}

\begin{Lemma} \label{Lemma5} Let $n =2k+1$ and $1 \le r \le k$. Then
$${\rm{dim }} \ V_{k, k+1}^{(r)} = {n \choose r} -{n \choose r-1}.$$
\end{Lemma}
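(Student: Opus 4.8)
The plan is to exhibit $V_{k,k+1}^{(r)}$ as the image of one linear map and to compute its dimension by rank--nullity. Identify a point of $\mathbf{R}^{ {n \choose r} }$ with a weight assignment $x=(x_1,\dots,x_{ {n\choose r} })$ on the $r$-subsets of $S$, and let $W\subseteq \mathbf{R}^{ {n\choose r} }$ be the solution space of the constraint system (\ref{constraints}). By (\ref{function}) each $x$ determines $f(A)$ for every $A$, and $x\mapsto f(A)$ is linear; hence
$$\Phi(x)=\left(f(A_1^{(k)}),\dots,f(A_{ {n\choose k} }^{(k)}),\,f(A_1^{(k+1)}),\dots,f(A_{ {n\choose k+1} }^{(k+1)})\right)$$
is linear and, by definition, $V_{k,k+1}^{(r)}=\Phi(W)$. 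Therefore
$$\dim V_{k,k+1}^{(r)}=\dim W-\dim\!\left(\ker\Phi\cap W\right),$$
and it suffices to prove: (i) the ${n\choose r-1}$ constraints are linearly independent, so that $\dim W={n\choose r}-{n\choose r-1}$; and (ii) $\Phi$ is injective, so that $\ker\Phi\cap W=\{\mathbf{0}\}$.

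Both (i) and (ii) reduce to the injectivity of inclusion (``up'') operators. For $t<n/2$ let $U_t$ carry a function $g$ on $t$-subsets to $(U_tg)(B)=\sum_{R\subset B,\,|R|=t}g(R)$ on $(t+1)$-subsets; its transpose is the ``down'' operator $D_{t+1}$. For (i), a relation $\sum_R c_R\rho_R=\mathbf{0}$ among the constraint rows $\rho_R$ (indexed by $(r-1)$-sets) is exactly the condition $\sum_{R\in{ A\choose r-1}}c_R=0$ for every $r$-set $A$, i.e. $U_{r-1}c=0$; so independence of the constraints is injectivity of $U_{r-1}$. For (ii), the $k$-block of $\Phi$ is the map $f|_r\mapsto f|_k$, and counting saturated chains shows it equals $\frac{1}{(k-r)!}\,U_{k-1}\cdots U_r$; thus if each $U_t$ with $r\le t\le k-1$ is injective, the $k$-block already is, forcing $\ker\Phi=\{\mathbf{0}\}$. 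Since $n=2k+1$ and $r\le k$, every level occurring here satisfies $t\le k-1<n/2$, so everything rests on a single claim: for each $t$ with $0\le t<n/2$, the operator $U_t$ is injective.

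I would prove this claim self-containedly, avoiding the spectrum of the Johnson scheme, by a positivity argument in the same spirit as Lemmas~\ref{Lemma2} and~\ref{Lemma3}. A double count of length-two chains yields, on functions on $t$-sets, the identities $D_{t+1}U_t=(n-t)I+A$ and $U_{t-1}D_t=tI+A$, where $A$ is the Johnson adjacency operator on $t$-sets; subtracting gives the commutation relation $D_{t+1}U_t-U_{t-1}D_t=(n-2t)I$. Because $D_t=U_{t-1}^{T}$, the operator $U_{t-1}D_t=U_{t-1}U_{t-1}^{T}$ is positive semidefinite, so $A\succeq -t\,I$, and hence
$$U_t^{T}U_t=D_{t+1}U_t=(n-t)I+A\ \succeq\ (n-2t)I,$$
which is positive definite for $t<n/2$. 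Thus $U_tg=0$ forces $g=0$, establishing the claim and with it (i) and (ii), so that $\dim V_{k,k+1}^{(r)}=\dim W={n\choose r}-{n\choose r-1}$.

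The main obstacle is exactly this up-operator injectivity; once the commutation identity and the semidefiniteness of $U_{t-1}U_{t-1}^{T}$ are in hand the rest is bookkeeping. (One could instead quote the classical full rank of inclusion matrices for $t\le n-t$, but the positivity route keeps the argument elementary and in the association-scheme-free spirit of this section.)
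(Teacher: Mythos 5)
Your proof is correct, and it shares the paper's skeleton --- realize $V_{k,k+1}^{(r)}$ as the image of the constraint-solution space under a linear lift to levels $k$ and $k+1$, compute the dimension of that solution space, and show the lift loses nothing --- but it differs at the decisive step. The paper simply cites Gottlieb's 1966 theorem that every inclusion matrix $M_{i,j}$ has full rank, and uses it twice: once to get $\mbox{dim } V_r^{(r)} = {n \choose r} - \mbox{rank } M_{r,r-1} = {n \choose r}-{n \choose r-1}$ (your step (i)), and once, through the Sylvester-type inequality $\mbox{dim } (VM) \ge \mbox{dim } V + \mbox{rank } M - {n \choose r}$ applied to $M_{r,k}$, to conclude that the lift preserves dimension (your step (ii), except that you obtain outright injectivity of $\Phi$ rather than a dimension inequality). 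You instead prove both rank facts from scratch: independence of the constraints is injectivity of $U_{r-1}$; the $k$-block of $\Phi$ is $\frac{1}{(k-r)!}U_{k-1}\cdots U_r$ by the saturated-chain count; and injectivity of each $U_t$ with $t<n/2$ follows from the commutation identity $D_{t+1}U_t - U_{t-1}D_t = (n-2t)I$ together with positive semidefiniteness of $U_{t-1}U_{t-1}^{T} = tI+A$, which forces $U_t^{T}U_t = (n-t)I + A$ to be positive definite. What the paper's route buys is brevity; what yours buys is self-containedness, which is arguably more faithful to the section's announced goal of avoiding association-scheme machinery --- your argument is, in effect, an elementary proof of exactly the case of Gottlieb's theorem that the paper needs, and your identity $D_{t+1}U_t = (n-t)I + A$ is the level-$t$ analogue of the identity $M_{2k+1}^2 = \mbox{diag}(J,J) + (k+1)I$ underlying the paper's first proof. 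One small boundary remark: when $r=1$ your key claim is invoked at $t=0$, where $U_{t-1}D_t$ does not literally exist; but there $A$ is the $1\times 1$ zero matrix on the single $0$-set, so your lower bound on $A$ holds trivially and $U_0^{T}U_0 = D_1U_0 = nI$ is already positive definite, so the argument stands.
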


\begin{proof} Let $M_{i,j}$ be the incidence matrix whose rows
correspond to the $i$-subsets $A_1^{(i)}, A_2^{(i)}, \ldots, A_{ { n
\choose i }}^{(i)} $, and whose columns correspond to the
$j$-subsets $A_1^{(j)}, A_2^{(j)}, \ldots, A_{ { n \choose j}}^{(j)}
$; that is, the $(r,s)$-entry of $M_{i,j}$ is $1$ if $A_r^{(i)}
\subset A_s^{(j)}$ or $A_s^{(j)} \subset A_r^{(i)},$ and $0$
otherwise. By \cite[Corollary 2]{Gottlieb}, the matrix $M_{i,j}$ has
full rank; that is,
$$ \mbox{rank } M_{i,j} = \min \left \{ {n \choose i}, {n \choose j} \right \}.$$
Recall the definition that $V_r^{(r)} = \left \{\left ( f \left
(A_1^{(r)} \right ), \ldots, f \left (A_{ { n \choose r }}^{(r)}
\right ) \right ) \right \}.$ By (\ref{constraints}), $V_r^{(r)}$
consists of all solution sets to the following homogeneous matrix
equation:
$$\left (x_1,x_2, \ldots, x_{{n \choose r }} \right ) M_{r, r-1} = (
0, 0, \ldots, 0).$$ Thus $$\mbox{dim } V_r ^{(r)} = {n \choose r} -
\mbox{rank } M_{r,r-1} = {n \choose r} - {n \choose r-1}.$$ By
(\ref{function}) and the definition of $V_{k,k+1}^{(r)}$, each
vector in $V_{k,k+1}^{(r)}$ can be written as
$$\left (x_1,x_2, \ldots,
x_{{n \choose r }} \right ) \left [ M_{r, k} \ \ \vdots \ \ M_{r,
k+1}\right ]$$ for some vector $\left (x_1,x_2, \ldots, x_{{n
\choose r }} \right ) \in V_r^{(r)}$. This implies that $V_{k,
k+1}^{(r)} = V_r ^{(r)} \left [ M_{r, k} \ \ \vdots \ \ M_{r,
k+1}\right ].$ Thus
$$\begin{array} {ll}
\mbox{dim } V_r ^{(r)} & \ge \mbox{dim } V_{k,k+1}^{(r)} \ge
\mbox{dim } V_r ^{(r)}+ \mbox{rank } \left [ M_{r, k} \ \ \vdots \ \
M_{r, k+1}\right ] - {n \choose r} \vspace{.2cm} \\
& \ge \mbox{dim } V_r^{(r)} + \mbox{rank } M_{r, k} - {n \choose
r}\vspace{.1cm}\\
&=\mbox{dim } V_r^{(r)} +  \min \left \{ {n \choose r}, {n \choose
k} \right \} - {n \choose r}\vspace{.1cm}\\
& =\mbox{dim } V_r ^{(r)} \end{array} $$ and so
$$\mbox{dim
} V_{k,k+1} ^{(r)}= \mbox{dim } V_r ^{(r)}= { n \choose r} -{n
\choose r-1}.$$
\end{proof}

\begin{Theorem} Let $n =2k+1$ and $1 \le r \le k$. Then
$$E_{k+1-r} = V_{k,k+1}^{(r)}$$
and
$${\rm{dim }} \ E_{k+1-r} = {\rm{dim }} \ E_{r-k-1}= {n \choose r} -{n
\choose r-1}.$$ Furthermore, the characteristic polynomial of the
matrix $M_n$ is $$|\lambda {\bf I} - M_n| = \Pi_{i=1}^{k+1} (\lambda
\pm i)^{{n \choose k+1-i} - {n \choose k-i}}.$$
\end{Theorem}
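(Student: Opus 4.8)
The plan is to derive the whole theorem from Lemmas~\ref{Lemma4} and \ref{Lemma5} by a single global dimension count: I will assemble lower bounds on the dimensions of enough eigenspaces and then show that these lower bounds already sum to the order $2{n \choose k}$ of the matrix, which forces every inequality to be an equality. First, for each $r$ with $1 \le r \le k$, Lemma~\ref{Lemma4} gives $V_{k,k+1}^{(r)} \subseteq E_{k+1-r}$ and Lemma~\ref{Lemma5} gives $\dim V_{k,k+1}^{(r)} = {n \choose r} - {n \choose r-1}$, hence $\dim E_{k+1-r} \ge {n \choose r} - {n \choose r-1}$. Reindexing by $i = k+1-r$, this reads $\dim E_i \ge {n \choose k+1-i} - {n \choose k-i}$ for $1 \le i \le k$.

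Next I would exploit bipartiteness. Since $M_n$ is the adjacency matrix of a bipartite graph with parts ${S \choose k}$ and ${S \choose k+1}$, its spectrum is symmetric about $0$: writing $M_n$ in block form and splitting an eigenvector $(u;v)$ along the bipartition, if $(u;v)$ belongs to $\lambda$ then $(u;-v)$ belongs to $-\lambda$, so $\dim E_{-i} = \dim E_i$. Thus also $\dim E_{r-k-1} = \dim E_{k+1-r} \ge {n \choose r} - {n \choose r-1}$. Finally, $M_n$ is $(k+1)$-regular, so the all-ones vector is an eigenvector for $k+1$, giving $\dim E_{k+1} \ge 1$, and by the same symmetry $\dim E_{-(k+1)} \ge 1$; note $1 = {n \choose 0} - {n \choose -1}$ matches the $i=k+1$ instance of the claimed formula.

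The crux is the count. Adding these lower bounds over the candidate eigenvalues $\pm 1, \ldots, \pm(k+1)$ and using the telescoping identity $\sum_{i=1}^{k+1}\left[{n \choose k+1-i} - {n \choose k-i}\right] = \sum_{j=0}^{k}\left[{n \choose j} - {n \choose j-1}\right] = {n \choose k}$ (with the convention ${n \choose -1} = 0$), the total of the lower bounds equals $2{n \choose k}$, which is exactly $|V(M_n)| = {n \choose k} + {n \choose k+1}$, the order of $M_n$. Since $M_n$ is a real symmetric matrix, eigenspaces for distinct eigenvalues are mutually orthogonal, so the sum of their dimensions cannot exceed $2{n \choose k}$. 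Hence every inequality above is forced to be an equality: $\dim E_{k+1-r} = \dim E_{r-k-1} = {n \choose r} - {n \choose r-1}$ for $1 \le r \le k$, the eigenvalues $\pm(k+1)$ are simple and absorb the last two dimensions, and $M_n$ has no further eigenvalues. In particular the inclusion $V_{k,k+1}^{(r)} \subseteq E_{k+1-r}$ of Lemma~\ref{Lemma4}, now between spaces of equal finite dimension, upgrades to the equality $E_{k+1-r} = V_{k,k+1}^{(r)}$.

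With all eigenvalues and multiplicities pinned down, the characteristic polynomial is immediate: the eigenvalues are $\pm i$ for $1 \le i \le k+1$, the multiplicity of $\pm i$ being ${n \choose k+1-i} - {n \choose k-i}$, so $|\lambda {\bf I} - M_n| = \Pi_{i=1}^{k+1}(\lambda \pm i)^{{n \choose k+1-i} - {n \choose k-i}}$. The only genuine obstacle is making the count close on the nose: one must verify that the telescoping sum hits $2{n \choose k}$ exactly and, since Lemmas~\ref{Lemma4}--\ref{Lemma5} are stated only for $1 \le r \le k$, handle the extreme eigenvalue $k+1$ separately via the $(k+1)$-regularity of $M_n$ (the all-ones eigenvector) rather than through those lemmas.
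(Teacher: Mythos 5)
Your proposal is correct and follows essentially the same route as the paper: lower bounds on $\dim E_{k+1-r}$ from Lemmas~\ref{Lemma4} and \ref{Lemma5}, doubling via the bipartite symmetry $\dim E_{-i} = \dim E_i$, handling $\pm(k+1)$ through $(k+1)$-regularity, and closing the telescoping count against the order $2{n \choose k}$ to force all inequalities into equalities. The only (harmless) difference is that you get by with $\dim E_{k+1} \ge 1$ from the all-ones vector, whereas the paper invokes connectedness to assert $\dim E_{k+1} = 1$ outright---your weaker bound suffices for the counting argument.
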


\begin{proof} The equation $\mbox{dim } E_{k+1-r} = \mbox{dim } E_{r-k-1}$ holds since the
middle-cube $M_n$ is a bipartite graph. Since $M_n$ is a connected
$(k+1)$-regular graph, we have $\mbox{dim } E_{k+1} =1$.
By Lemmas \ref{Lemma4} and \ref{Lemma5},
$$\begin{array}{ll}
{n \choose k} + {n \choose k+1} & \ge \sum_{r=0}^k \left ( \mbox{dim
} E_{k+1-r} +\mbox{dim } E_{r-k-1} \right ) \vspace{.1cm} \\
& = 2\sum_{r=0}^k \mbox{dim } E_{k+1-r} \vspace{.1cm}\\
& \ge 2+ 2\sum_{r=1}^k \mbox{dim } V_{k,k+1} ^{(r)}\vspace{.1cm} \\
& =  2+2\sum_{r=1}^k \left ( { n \choose r} - { n \choose r-1 }
\right ) \vspace{.1cm} \\
& = 2 {n \choose k} ={n \choose k} + {n \choose k+1}. \end{array}$$
Thus all equalities hold throughout. This also implies that all
eigenvalues of $M_n$ are integers $i$ with $1 \le |i | \le k+1$, and
that each eigenvalue of $i$ has multiplicity ${n \choose k+1-i}-{ n
\choose k-i}$, where ${n \choose -1} =0$.
\end{proof}

\section{Conclusion}

We prove that the characteristic polynomial of the middle-cube $M_n$
with $n=2k+1$ is
$$\Pi_{i=1}^{k+1} (\lambda \pm i)^{{n
\choose k+1-i} - {n \choose k-i}}.$$ This spectral property may be
useful in future research on various properties of the middle-cubes.

\end {document}